\documentclass[a4paper,11pt]{article}
\usepackage{mathtools}
\usepackage{amsmath}
\usepackage{mathrsfs}
\usepackage{amssymb}
\usepackage{amsthm}
\usepackage{dsfont}
\usepackage{graphicx}
\usepackage{bmpsize}
\usepackage{times}
\usepackage{ textcomp }
\usepackage{mathtools}
\usepackage{latexsym, empheq, fancybox}
\usepackage{mathrsfs}
\usepackage{exscale}
\usepackage{booktabs, array}
\usepackage[english]{babel}
\usepackage{indentfirst}
\usepackage{enumerate}

\textwidth14cm
\oddsidemargin1cm
\newtheorem{theorem}{Theorem}[section]
\newtheorem{lemma}[theorem]{Lemma}

\newtheorem{definition}[theorem]{Definition}

\textwidth14cm
\oddsidemargin1cm
\usepackage{authblk}
\title{\Large\textbf{  Topological pressure, mistake functions and average metric }
\thanks{Address:School of Mathematical Sciences and Institute of Mathematics, Nanjing Normal University, Nanjing 210023, P.R. China}}
\author{Ercai Chen$^{ \, * }$\thanks{
       {\tt\small  ecchen@njnu.edu.cn}} ,\
Shen He$^{ \, * }$\thanks{{\tt\small\quad hs2398788113@163.com
} }
\ \ and Xiaoyao Zhou$^{\,*}$\thanks{ {\tt\small zhouxiaoyaodeyouxian@126.com}
        }
}

\begin{document}
\date{}

\maketitle
\begin{abstract}
In this paper, we showed that the Pesin pressure of any subset under a mistake function is equal to the classical Pesin pressure of the subset in dynamical systems. Our result extended the result of \cite{chengzhaocao} in additive case, which proved the topological pressure of the whole system is self adaptable  under a mistake function. As an application, we showed that the Pesin pressure defined by average metric is equal to the classical Pesin pressure.

\end{abstract}
\noindent
\textbf{Keywords:} mistake function, topological pressure, average metric.

\section{Introduction and Preliminaries}
In \cite{ruelle}, Ruelle first introduced the concept of topological pressure for expansive dynamical systems. Later, inspired by the dimension theory,  Pesin and Pitskel \cite{pesinpitskel} extended the notion of topological pressure on any subset. After that, topological pressure has played an important role in thermodynamic formalism. In this paper, we will show the topological pressure of arbitrary subset under a mistake function is equal to the topological pressure of the subset. The motivations of this paper come from two points. One is from \cite{chengzhaocao}. In \cite{chengzhaocao}, Cheng, Zhao and Cao defined the pressure for asymptotically sub-additive potentials under a mistake function. They used the techniques of ergodic theory to show the topological pressure of the whole system under mistake function is the same as the topological pressure without mistake function. A question arises naturally:
Does the topological pressure keep consistent on arbitrary subset? The other motivation comes form \cite{grogerjager}. In \cite{grogerjager}, Gr\"oger and   J\"ager showed that the entropy of the whole system with Bowen metric agrees with the entropy with average metric. It has reason  to ask whether the entropy defined by Bowen metric is the same as the entropy defined by average metric on any subsets. In this paper, we answer the above questions affirmatively. It is mentioning that we only focus on the topological pressure with additive potentials here although Cheng, Zhao and Cao \cite{chengzhaocao} deals with non-additive case.

Throughout the paper, $(X,f)$ is a topological dynamical system (TDS for short) if $(X,d)$ is a compact metric space and $f:X\to X$ is a continuous self-map.

First, we give the definition of the mistake function, which is a little bit different form \cite{pfistersullivan,chengzhaocao}.

 \begin{definition}
 Given $\epsilon_{0}>0$, the function $g:\mathbb{N}\times(0,\epsilon_{0}]\to \mathbb{R}$ is called a mistake function if
 $g(n,\epsilon)\leq g(n+1,\epsilon)$ for all $\epsilon\in(0,\epsilon_0]$ and $n\in\mathbb N$ and
 \begin{align*}
 \lim\limits_{\epsilon\to0}\lim\limits_{n\to\infty}\dfrac{g(n,\epsilon)}{n}=0
 \end{align*}
For a mistake function $g,$ if $\epsilon>\epsilon_0,$ set $g(n,\epsilon)=g(n,\epsilon_0).$
\end{definition}

\begin{definition}
The mistake Bowen ball $B_{n}(g;x,\epsilon)$ centered at $x$ with radius $\epsilon$ and length $n$ associated to the mistake function $g$ is given by the following set:
\begin{align*}
\left\{y\in X:\exists \Lambda\subset\Lambda_{n},\sharp(\Lambda_{n}\setminus \Lambda)\leq g(n,\epsilon) \text{ and } \max\{d(f^{j}x,f^{j}y):j\in\Lambda\} \leq\epsilon\right\}
\end{align*}
where $\Lambda_{n}=\{0,1,2,\cdots,n-1\}$.
\end{definition}
It is obvious that the classical Bowen ball $B_n(x,\epsilon)$ is a subset of  $B_{n}(g;x,\epsilon).$
First, we present the definition of Pesin pressure  via Bowen balls.
\begin{definition}
Let $(X,T)$ be a TDS and $\varphi$ be a continuous function. Given $Z\subset X,\delta>0$ and $N\in\mathbb N,$ let $\mathcal P(Z,N,\delta)$ be the collection of countable sets $\{(x_i,n_i)\}\subset Z\times\{N,N+1,\cdots\}$
such that $Z\subset\bigcup\limits_i B_{n_i}(x_i,\delta).$ For each $s\in\mathbb R,$ consider the set functions
\begin{align*}
m(Z,s,\varphi,N,\delta)&=\inf\limits_{\mathcal P(Z,N,\delta)}\sum\limits_{(x_i,n_i)}\exp(-n_is+S_{n_i}\varphi(x_i)),\\
m(Z,s,\varphi,\delta)&=\lim\limits_{N\to\infty}m(Z,s,\varphi,N,\delta).
\end{align*}
This function is non-increasing in $s,$ and takes value $\infty$ and 0 at all but most one value of $s.$ Denoting the critical value of $s$ by
\begin{align*}
P_Z(\varphi,\delta)=\inf\{s\in\mathbb R:m(Z,s,\varphi,\delta)=0\}=
\sup\{s\in\mathbb R:m(Z,s,\varphi,\delta)=\infty\}.
\end{align*}
The Pesin pressure of $\varphi$ on $Z$ is $P_Z(\varphi)=\lim\limits_{\delta\to0}P_Z(\varphi,\delta).$
\end{definition}
Now, we use the mistake Bowen ball to replace the classical Bowen ball in defining the new topological pressure.
\begin{definition}
Let  $(X,f)$ be a TDS, $\varphi:X\to\mathbb{R}$ be a continuous function $g$ be a mistake function.
Given $Z\subset X$, $\delta>0$, and $N\in \mathbb{N}$.
Let $\mathcal P^g(Z,N,\delta)$ be the collection of countable sets
$\{(x_i,n_i)\}\subset Z\times\{N,N+1,\cdots\}$ such that $Z\subset\bigcup\limits_i B_{n_i}(g;x_i,\delta).$ For each $s\in\mathbb R,$ consider the set functions
\begin{align*}
m^g(Z,s,\varphi,N,\delta)&=\inf\limits_{\mathcal P^g(Z,N,\delta)}\sum\limits_{(x_i,n_i)}\exp(-n_is+S_{n_i}\varphi(x_i)),\\
m^g(Z,s,\varphi,\delta)&=\lim\limits_{N\to\infty}m^g(Z,s,\varphi,N,\delta).
\end{align*}
This function is non-increasing in $s,$ and takes value $\infty$ and 0 at all but most one value of $s.$ Denoting the critical value of $s$ by
\begin{align*}
P_Z(\varphi,\delta,g)=\inf\{s\in\mathbb R:m^g(Z,s,\varphi,\delta)=0\}=
\sup\{s\in\mathbb R:m^g(Z,s,\varphi,\delta)=\infty\}.
\end{align*}
The Pesin pressure of $\varphi$ on $Z$ with mistake function $g$ is $P_Z(\varphi,g)=\lim\limits_{\delta\to0}P_Z(\varphi, \delta,g).$
\end{definition}
In \cite{pesinpitskel}, Pesin and Pitskel used open covers to define the topological pressure as follows:
\begin{definition}
Let $(X,f)$ be a TDS, $\varphi$ be a continuous function. We fix a finite open cover $\mathcal U$ of $X$,
and let $\mathcal S_m(\mathcal U)$ denote the set of all strings ${\bf U}=\{U_{w_1}\cdots U_{w_m}: U_{w_j}\in\mathcal U\}$ of length $m=m({\bf U}).$ Write $\mathcal S(\mathcal U)=\bigcup\limits_{m\geq0}\mathcal S_m(\mathcal U).$ For each ${\bf U}\in\mathcal S(\mathcal U),$ let $X({\bf U})=\{x\in X:f^{j-1}(x)\in U_{w_j} \text{ for all } j=1,\cdots, m({\bf U})\}.$ Given $Z\subset X$ and $N\in\mathbb N,$ we let $\mathcal S(Z,\mathcal U,N)$ denote the set of all finite or countable collections $\mathcal G$ of strings of length at least $N$ which cover $Z.$ We define  set functions by
\begin{align*}
m'(Z,\varphi,\mathcal U,s, N)&=\inf\limits_{\mathcal S(Z,\mathcal U,N)}\left\{\sum\limits_{{\bf U}\in\mathcal G}\exp(-sm({\bf U})+\sup\limits_{x\in Z({\bf U})}S_{m({\bf U})}\varphi(x))\right\}\\
m'(Z,\varphi,\mathcal U,s)&=\lim\limits_{N\to\infty}m'(Z,\varphi,\mathcal U,s, N).
\end{align*}
Denote the critical value of $m'(Z,\varphi,\mathcal U,s)$ by
\begin{align*}
P'_Z(\varphi,\mathcal U)=\inf\{s:m'(Z,\varphi,\mathcal U,s)=0\}=\sup\{s:m'(Z,\varphi,\mathcal U,s)=\infty\}.
\end{align*}
 The Pesin topological pressure of $\varphi$ on $Z$ is given by
 \begin{align*}
 P'_Z(\varphi)=\lim\limits_{{\rm diam } \mathcal U\to0} P_Z'(\varphi,\mathcal U).
 \end{align*}
\end{definition}
Inspired by this, we give a  definition of Pesin pressure with mistake function by using open covers.
\begin{definition}
Let $(X,f)$ be a TDS, $\varphi$ be a continuous function, $g$ be a mistake function. We fix a finite open cover $\mathcal U$ of $X$.
For each ${\bf U}\in\mathcal S_m(\mathcal U),$ let ${\bf U}^g=\{{\bf U}^*:\sharp\{{U_{i_j}^*\neq U_{i_j},j=1,2,\cdots,m\}\leq g(m,{\rm diam~ } \mathcal U) }\},$ and
 let $\mathcal S^g_m(\mathcal U)$ denote the set of such  ${\bf U}^g $ of length $m=m({\bf U}).$
Write $\mathcal S^g(\mathcal U)=\bigcup\limits_{m\geq0}\mathcal S^g_m(\mathcal U).$
For each ${\bf U}^g\in\mathcal S^g(\mathcal U),$ let $X({\bf U}^g)=\{x\in X:\exists {\bf U}^*\in {\bf U}^g \text{ such that } f^{j-1}(x)\in U^*_{w_j} \text{ for all } j=1,\cdots, m({\bf U})\}.$ Given $Z\subset X$ and $N\in\mathbb N,$ we let $\mathcal S^g(Z,\mathcal U,N)$ denote the set of all finite or countable collections $\mathcal G^g$ of strings with mistake function of length at least $N$ which cover $Z.$ We define  set functions by
\begin{align*}
m'(Z,\varphi,\mathcal U,s, N,g)&=\inf\limits_{\mathcal S^g(Z,\mathcal U,N)}\left\{\sum\limits_{{\bf U}^g\in\mathcal G^g}\exp(-sm({\bf U})+\sup\limits_{x\in Z({\bf U}^g)}S_{m({\bf U})}\varphi(x))\right\}\\
m'(Z,\varphi,\mathcal U,s,g)&=\lim\limits_{N\to\infty}m'(Z,\varphi,\mathcal U,s, N,g).
\end{align*}
Denote the critical value of $m'(Z,\varphi,\mathcal U,s,g)$ by
\begin{align*}
P'_Z(\varphi,\mathcal U,g)=\inf\{s:m'(Z,\varphi,\mathcal U,s,g)=0\}=\sup\{s:m'(Z,\varphi,\mathcal U,s,g)=\infty\}.
\end{align*}
 The Pesin topological pressure of $\varphi$ on $Z$ with mistake function $g$ is given by
 \begin{align*}
 P'_Z(\varphi,g)=\lim\limits_{{\rm diam } \mathcal U\to0} P_Z'(\varphi,\mathcal U,g).
 \end{align*}
\end{definition}
In \cite{climenhaga}, Climenhaga showed the definition of classical Pesin pressure by using Bowen ball  is equivalent to the definition defined by using open covers.
Inspired by \cite{climenhaga}, we are going  to show that the two definition of Pesin pressure with mistake function are also equivalent.
\begin{theorem}
Let $(X,f)$ be a TDS, $Z\subset X$, $\varphi$ be a continuous function on $X$, $g$ be a mistake function. Then
\begin{align*}
P_Z'(\varphi,g)=P_Z(\varphi,g).
 \end{align*}
\end{theorem}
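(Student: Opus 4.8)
The plan is to run Climenhaga's comparison between the Bowen-ball and open-cover presentations of Pesin pressure in the mistake setting. Concretely I would set up two geometric inclusions relating the mistake Bowen balls $B_n(g;x,\epsilon)$ to the string sets $X({\bf U}^g)$, and absorb the resulting defect in the Birkhoff sums through uniform continuity of $\varphi$ on the compact space $X$. Since $P_Z(\varphi,\delta,g)$ is non-increasing in $\delta$ and $P'_Z(\varphi,\mathcal U,g)$ is non-decreasing under refinement, it suffices to compare the two at matching scales and then pass to the iterated limit; write $\ell(\epsilon):=\lim_{n\to\infty}g(n,\epsilon)/n$, so that the defining property of the mistake function reads $\ell(\epsilon)\to0$ as $\epsilon\to0$, and let $H$ denote the binary entropy function.

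For $P_Z(\varphi,g)\le P'_Z(\varphi,g)$, fix a finite open cover $\mathcal U$ and ${\bf U}^g\in\mathcal S^g_n(\mathcal U)$ with base string ${\bf U}=U_{w_1}\cdots U_{w_n}$. The key point is to decompose $X({\bf U}^g)=\bigcup_A Y_A$ over the \emph{mistake-position sets} $A\subset\{1,\dots,n\}$ with $\sharp A\le g(n,\operatorname{diam}\mathcal U)$, where $Y_A=\{x:f^{j-1}x\in U_{w_j}\text{ for all }j\notin A\}$. If $Y_A\cap Z\neq\emptyset$, choose $x_A\in Y_A\cap Z$; for $j\notin A$ both $f^{j-1}x_A$ and $f^{j-1}y$ (with $y\in Y_A$) lie in $U_{w_j}$, whose diameter is $\le\operatorname{diam}\mathcal U$, so $Y_A\subset B_n(g;x_A,\operatorname{diam}\mathcal U)$ with the \emph{same} budget $g(n,\operatorname{diam}\mathcal U)$, and $S_n\varphi(x_A)\le\sup_{Z({\bf U}^g)}S_n\varphi$. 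Thus every $\mathcal G^g\in\mathcal S^g(Z,\mathcal U,N)$ turns into a member of $\mathcal P^g(Z,N,\operatorname{diam}\mathcal U)$ at the cost of replacing each string class by at most $\binom{n}{\le g(n,\operatorname{diam}\mathcal U)}$ balls, a multiplicity of exponential growth rate $\le H(\ell(\operatorname{diam}\mathcal U))$ for $\operatorname{diam}\mathcal U$ small. This yields $m^g(Z,s,\varphi,\operatorname{diam}\mathcal U)\le m'(Z,\varphi,\mathcal U,s-H(\ell(\operatorname{diam}\mathcal U)),g)$, hence $P_Z(\varphi,\operatorname{diam}\mathcal U,g)\le P'_Z(\varphi,\mathcal U,g)+H(\ell(\operatorname{diam}\mathcal U))$, and letting $\operatorname{diam}\mathcal U\to0$ finishes this inequality since $\ell(\operatorname{diam}\mathcal U)\to0$.

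For $P'_Z(\varphi,g)\le P_Z(\varphi,g)$, let $\eta$ be a Lebesgue number of $\mathcal U$ and take $0<\delta<\eta/2$. Given $x\in X$ pick, for $0\le j\le n-1$, a set $U^{(j)}\in\mathcal U$ with $\overline B(f^jx,\delta)\subset U^{(j)}$ and put ${\bf U}_x=U^{(0)}\cdots U^{(n-1)}$; if $y\in B_n(g;x,\delta)$ then $f^jy\in U^{(j)}$ off the mistake set of $y$, so $B_n(g;x,\delta)\subset X(({\bf U}_x)^g)$. Moreover every $z\in X(({\bf U}_x)^g)$ satisfies $f^jz\in U^{(j)}\ni f^jx$ for all $j$ outside a set of size $\le g(n,\operatorname{diam}\mathcal U)$, so if $\omega$ is a modulus of uniform continuity for $\varphi$ then $S_n\varphi(z)\le S_n\varphi(x)+n\,\omega(\operatorname{diam}\mathcal U)+2g(n,\operatorname{diam}\mathcal U)\|\varphi\|_\infty$. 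Choosing the ball centres in $Z$, a cover of $Z$ by mistake Bowen balls of scale $\delta$ becomes a member of $\mathcal S^g(Z,\mathcal U,N)$, and the last estimate gives $m'(Z,\varphi,\mathcal U,s,g)\le m^g(Z,s-\omega(\operatorname{diam}\mathcal U)-2\ell(\operatorname{diam}\mathcal U)\|\varphi\|_\infty,\varphi,\delta)$, so $P'_Z(\varphi,\mathcal U,g)\le P_Z(\varphi,\delta,g)+\omega(\operatorname{diam}\mathcal U)+2\ell(\operatorname{diam}\mathcal U)\|\varphi\|_\infty$; letting $\operatorname{diam}\mathcal U\to0$ (whence $\eta\to0$ and so $\delta\to0$) and using $P_Z(\varphi,\delta,g)\to P_Z(\varphi,g)$ concludes.

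The step I expect to be the real obstacle is the bookkeeping of the mistake budgets when passing between the two formulations. In the first inclusion the decomposition over mistake-position sets is exactly what keeps the budget at $g(n,\operatorname{diam}\mathcal U)$; comparing a string class to a single ball directly would cost a factor $2$ in the budget and then force an auxiliary argument that $P_Z(\varphi,g)=P_Z(\varphi,2g)$. In the second inclusion the ball scale $\delta$ is unavoidably smaller than $\operatorname{diam}\mathcal U$, so the inclusion $B_n(g;x,\delta)\subset X(({\bf U}_x)^g)$ implicitly needs $g(n,\delta)\le g(n,\operatorname{diam}\mathcal U)$; when this fails one must instead cover a Hamming ball of radius $g(n,\delta)$ by Hamming balls of radius $g(n,\operatorname{diam}\mathcal U)$ and show the multiplicity contributes negligibly, and it is here that the axioms $g(n,\cdot)\le g(n+1,\cdot)$ and $\lim_{\epsilon\to0}\lim_{n\to\infty}g(n,\epsilon)/n=0$ have to be used carefully — together with a judicious choice of the covers relative to $\delta$ — so that all extra exponential rates and Birkhoff-sum corrections vanish after the iterated limit. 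Granting this, the two inclusions combine to give $P_Z(\varphi,g)=P'_Z(\varphi,g)$.
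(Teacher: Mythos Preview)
Your approach is the same Climenhaga-style comparison the paper uses: geometric inclusions between mistake Bowen balls and string classes $X({\bf U}^g)$, with Birkhoff-sum defects controlled by uniform continuity of $\varphi$ and the sublinear growth of $g$. In the direction $P'_Z(\varphi,g)\le P_Z(\varphi,g)$ your Lebesgue-number construction and the estimate $|S_n\varphi(x)-S_n\varphi(z)|\le n\,\omega(\operatorname{diam}\mathcal U)+2g(n,\cdot)\|\varphi\|_\infty$ match the paper's proof essentially line for line.

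The only substantive difference is in the direction $P_Z(\varphi,g)\le P'_Z(\varphi,g)$. The paper picks a single center $x_{{\bf U}^g}\in Z({\bf U}^g)\cap Z$ per string class and asserts $Z({\bf U}^g)\subset B_{m}(g;x_{{\bf U}^g},\delta)$ directly, with no multiplicity. You instead decompose $X({\bf U}^g)=\bigcup_A Y_A$ over the mistake-position sets $A$ and assign one ball per $Y_A$, paying a combinatorial factor $\binom{n}{\le g(n,\operatorname{diam}\mathcal U)}$ that you absorb via $H(\ell(\operatorname{diam}\mathcal U))\to 0$. Your route is more careful on the budget bookkeeping---it sidesteps the issue that two points of $Z({\bf U}^g)$ may follow strings disagreeing with the base ${\bf U}$ on \emph{different} sets, so their mutual mismatch can be as large as $2g$, a point the paper glosses over---while the paper's route is shorter. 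The Stirling/entropy estimate you invoke is exactly the one the paper deploys in its next theorem (comparing $P_Z(\varphi,g)$ with the classical $P_Z(\varphi)$), so your detour introduces nothing new. Your flagging of the $\epsilon$-monotonicity issue for $g$ in the second inclusion is also apt; the paper does not address it explicitly either.
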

\begin{proof}
First we show the following inequality:
\begin{align*}
P_Z'(\varphi,g)\geq P_Z(\varphi,g).
\end{align*}
Given $\delta>0,$ fix an open cover $\mathcal{U}$ of $X$, with diam $ \mathcal{U}<\delta$.
Given $\mathcal G^g\in\mathcal S^g(Z,\mathcal U,N),$ we may assume without loss of generality that for every ${\bf U}^g\in\mathcal G^g$ $Z({\bf U}^g)\cap Z\neq\emptyset.$ Thus for each such ${\bf U}^g,$ we choose $x_{{\bf U}^g}\in Z({\bf U}^g)\cap Z$ with
$Z({\bf U}^g)\subset B(x_{{\bf U}^g},m({\bf U}),\delta)$ and so
\begin{align*}
m'(Z,\varphi,\mathcal U,s, N,g)&=\inf\limits_{\mathcal S^g(Z,\mathcal U,N)}\sum\limits_{{\bf U}^g\in\mathcal G^g}\exp(-sm({\bf U})+\sup
\limits_{x\in Z({\bf U}^g)}S_{m({\bf U})}\varphi(x))\\
&\geq \inf\limits_{\mathcal P^g(Z,N,\delta)}\sum\limits_{(x_i,n_i)}\exp(-n_is+S_{n_i}\varphi(x_i))\\
&=m^g(Z,s,\varphi,N,\delta).
\end{align*}
Thus $P'_Z(\varphi,\mathcal U,g)\geq P_Z(\varphi,\delta,g),$ and taking $\delta\to0,$ we have
\begin{align*}
P'_Z(\varphi,g)\geq P_Z(\varphi,g).
\end{align*}
Next, we show the other inequality of opposite direction.
For $\delta>0,$ let
\begin{align*}
\epsilon(\delta)=\sup\{|\varphi(x)-\varphi(y)|:d(x,y)<\delta\}.
\end{align*}
Since $X$ is compact and $\varphi$ is continuous, we have $\lim\limits_{\delta\to0}\epsilon(\delta)=0.$ Furthermore, given $x\in X, y\in B_n(g;x,\delta),$ we have
\begin{align*}
|S_n\varphi(x)-S_n\varphi(y)|<n\epsilon(\delta)+2g(n,\delta)||\varphi||.
\end{align*}
For a fixed $\delta>0,$ we choose an open cover $\mathcal U$ with diam $(\mathcal U)<\epsilon(\delta).$ Let $L(\mathcal U)$
be the Lebesgue number of $\mathcal U,$ and consider $\{(x_i,n_i)\}\in\mathcal P^g(Z,N,L(\mathcal U))$. Then for each $(x_i,n_i)$ there exists ${\bf U}^g_i\in\mathcal S_{n_i}^g(\mathcal U)$ such that $B_{n_i}(g;x_i,L(\mathcal U))\subset Z({\bf U}_i^g).$
Let $\mathcal G'=\{{\bf U}^g_i\}.$ Then
\begin{align*}
m'(Z,\varphi,\mathcal U,s, N, g)&=\inf\limits_{\mathcal S^g(Z,\mathcal U,N)}\sum\limits_{{\bf U}^g\in\mathcal G^g}\exp(-sm({\bf U})+\sup\limits_{x\in Z({\bf U}^g)}S_{m({\bf U})}\varphi(x))\\
&\leq \sum\limits_{{\bf U}^g_i\in\mathcal G'} \exp(-sm({\bf U}_i)+\sup\limits_{x\in Z({\bf U}_i^g)}S_{m({\bf U}_i)}\varphi(x))\\
&\leq \sum\limits_{(x_i,n_i)}\exp(-n_i(s-\epsilon(\delta))+S_{n_i}\varphi(x_i)+2g(n_i,L(\mathcal U))||\varphi||)\\
&\leq \sum\limits_{(x_i,n_i)}\exp(-n_i(s-\epsilon(\delta)-\gamma)+S_{n_i}\varphi(x_i)),
\end{align*}
where $\gamma$ is a function such that $\exp(2g(n_i,L(\mathcal U))||\varphi||)\leq \exp(\gamma n_i) $ as $n_i\geq N$ and $\gamma\to0$ as diam $\mathcal U\to0$.
Thus taking $N\to\infty,$ $P_Z'(\varphi,\mathcal U,g)\leq P_Z(\varphi,\delta,g)-\epsilon(\delta)-\gamma$. As $\delta\to0,$ we have
\begin{align*}
P'_Z(\varphi,g)\leq P_Z(\varphi,g).
\end{align*}
To sum up, we have
\begin{align*}
 P_Z'(\varphi,g)= P_Z(\varphi,g).
 \end{align*}
\end{proof}
Now we start to compare the new topological pressure to the classical Pesin pressure.
\begin{theorem}\label{thm2}
Let $(X,f)$ be a TDS, $\varphi$ be a continuous function, $g$ be a mistake function. Then
\begin{align*}
P_Z(\varphi,g)=P_Z(\varphi).
\end{align*}
\end{theorem}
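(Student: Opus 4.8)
The plan is to prove the two inequalities separately. The inequality $P_Z(\varphi,g)\le P_Z(\varphi)$ is immediate: since $B_n(x,\delta)\subseteq B_n(g;x,\delta)$, any $\{(x_i,n_i)\}\in\mathcal P(Z,N,\delta)$ automatically lies in $\mathcal P^g(Z,N,\delta)$, so $m^g(Z,s,\varphi,N,\delta)\le m(Z,s,\varphi,N,\delta)$ for every $s$; passing to critical values and then letting $\delta\to0$ gives the claim. All the work is in the reverse inequality $P_Z(\varphi)\le P_Z(\varphi,g)$, so I will concentrate on that.

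The heart of the matter is a covering lemma. I would fix a target scale $\delta'>0$ and a finite cover $\{V_1,\dots,V_{N_{\delta'}}\}$ of $X$ by sets of diameter $\le\delta'$, where $N_{\delta'}$ is the associated covering number, and show that for all $\delta\in(0,\delta')$ and all $x\in X$ the mistake Bowen ball $B_n(g;x,\delta)$ is covered by at most $C_n=C_n(\delta,\delta')$ classical Bowen balls of radius $2\delta'$, where
\[
C_n\ \le\ N_{\delta'}^{\,g(n,\delta)}\sum_{k\le g(n,\delta)}\binom nk ,\qquad \limsup_{n\to\infty}\tfrac1n\log C_n\ \le\ \beta(\delta,\delta'),
\]
and $\beta(\delta,\delta')\to0$ as $\delta\to0$ with $\delta'$ held fixed. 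Indeed, for $y\in B_n(g;x,\delta)$ set $B=\{\,0\le j\le n-1:d(f^jx,f^jy)>\delta\,\}$, so $\#B\le g(n,\delta)$, and choose labels $c(j)$ with $f^jy\in V_{c(j)}$ for $j\in B$; then $y$ lies in $A_{B,c}:=\{z:d(f^jx,f^jz)\le\delta\ \forall j\notin B,\ f^jz\in V_{c(j)}\ \forall j\in B\}$. The number of admissible pairs $(B,c)$ is at most the displayed $C_n$, and every nonempty $A_{B,c}$ has diameter $\le\max(2\delta,\delta')\le2\delta'$ in the metric $d_n(z,z')=\max_{0\le j\le n-1}d(f^jz,f^jz')$, hence sits inside a Bowen ball $B_n(y_{B,c},2\delta')$ with $y_{B,c}\in A_{B,c}$, chosen inside $Z$ whenever $A_{B,c}\cap Z\ne\emptyset$ and the pair discarded otherwise. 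Since $g(n,\delta)/n\to h(\delta):=\lim_{n\to\infty}g(n,\delta)/n$ with $h(\delta)\to0$ as $\delta\to0$, the usual binomial estimate gives $\limsup_n\frac1n\log C_n\le h(\delta)\log N_{\delta'}+H(h(\delta))=:\beta(\delta,\delta')$ with $H(t)=-t\log t-(1-t)\log(1-t)$, and this tends to $0$ as $\delta\to0$ precisely because $\delta'$, hence $N_{\delta'}$, is fixed.

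Feeding this into the set functions is then routine. Given $\{(x_i,n_i)\}\in\mathcal P^g(Z,N,\delta)$, applying the lemma to each $B_{n_i}(g;x_i,\delta)$ produces a family $\{(y_{i,B,c},n_i)\}\in\mathcal P(Z,N,2\delta')$, and since $y_{i,B,c}\in B_{n_i}(g;x_i,\delta)$ the estimate $|S_{n_i}\varphi(x_i)-S_{n_i}\varphi(y_{i,B,c})|<n_i\epsilon(\delta)+2g(n_i,\delta)\|\varphi\|$ from the proof of the preceding theorem applies. Summing, and using $C_{n_i}\le\exp(n_i(\beta(\delta,\delta')+o(1)))$ and $g(n_i,\delta)\le n_i(h(\delta)+o(1))$ for $n_i\ge N$ with $N$ large, gives
\[
m(Z,s,\varphi,N,2\delta')\ \le\ m^g\!\left(Z,\,s-\mu_N,\,\varphi,N,\delta\right),\qquad \mu_N\longrightarrow\mu:=\beta(\delta,\delta')+\epsilon(\delta)+2h(\delta)\|\varphi\| .
\]
Letting $N\to\infty$ and using that $m^g$ is non-increasing in its $s$-argument yields $m(Z,s,\varphi,2\delta')\le m^g(Z,s-\mu-\eta,\varphi,\delta)$ for every $\eta>0$, hence, comparing critical values, $P_Z(\varphi,2\delta')\le P_Z(\varphi,\delta,g)+\mu$. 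Now I would let $\delta\to0$ with $\delta'$ fixed: $\mu\to0$ while $P_Z(\varphi,\delta,g)\to P_Z(\varphi,g)$ by definition, so $P_Z(\varphi,2\delta')\le P_Z(\varphi,g)$; finally $\delta'\to0$ gives $P_Z(\varphi)\le P_Z(\varphi,g)$, completing the proof.

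I expect the only genuine obstacle to be the order of limits in the covering lemma. One must resist letting the covering radius shrink in step with $\delta$: the "cost of a mistake'' contributes a factor $N_{\delta'}$ per bad coordinate, i.e.\ $N_{\delta'}^{\,g(n,\delta)}$ overall, and this is subexponential in $n$ \emph{uniformly as $\delta\to0$} only because $N_{\delta'}$ is kept fixed (whereas $N_{\delta'}\to\infty$ as $\delta'\to0$ could otherwise swamp the $o(n)$ gain coming from $g(n,\delta)=o(n)$). Once this decoupling is built in, everything else — the binomial count, the bookkeeping with $\epsilon(\delta)$, $\|\varphi\|$ and the $N$-dependent error terms, and the passage between $m$, $m^g$ and their critical values — parallels the proof of the preceding theorem. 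An alternative route, via the open-cover formulation and the identity $P'_Z(\varphi,g)=P_Z(\varphi,g)$ just proved together with the classical equality $P'_Z(\varphi)=P_Z(\varphi)$, reduces the statement to $P'_Z(\varphi,g)=P'_Z(\varphi)$; its combinatorial core — bounding the number of $\le g(m,\mathrm{diam}\,\mathcal U)$-perturbations of a string — is essentially the same and meets the same obstacle.
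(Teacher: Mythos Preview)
Your proof is correct, and in fact handles the order-of-limits issue more carefully than the paper's own argument. The paper proceeds entirely through the open-cover formulation: it invokes the preceding theorem together with Climenhaga's classical equivalence $P_Z'(\varphi)=P_Z(\varphi)$, and then compares $m'(Z,\varphi,\mathcal U,s,N,g)$ with $m'(Z,\varphi,\mathcal U,s,N)$ directly. For the easy direction this forces an extra correction term (since $\sup_{Z(\mathbf U^g)}S_m\varphi\ge\sup_{Z(\mathbf U)}S_m\varphi$), whereas your Bowen-ball argument via $B_n(x,\delta)\subset B_n(g;x,\delta)$ is genuinely one line. For the hard direction the paper decomposes each $\mathbf U^g$ into at most $\sum_{i\le g(m,\cdot)}\binom{m}{i}\,\#(\mathcal U)^i$ ordinary strings and asserts this is $\le e^{m\gamma}$ with $\gamma\to 0$ as $\mathrm{diam}\,\mathcal U\to 0$; but since $\#(\mathcal U)\to\infty$ simultaneously, this is exactly the $0\cdot\infty$ issue you flag, and the paper does not explain how it is resolved. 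Your two-scale device---fixing the coarse cover $\{V_1,\dots,V_{N_{\delta'}}\}$ so that $N_{\delta'}$ stays bounded while $\delta\to 0$ drives $g(n,\delta)/n\to 0$, and only afterwards sending $\delta'\to 0$---is precisely the decoupling that makes the estimate rigorous. Your closing paragraph correctly anticipates the paper's route and its weak point. One cosmetic remark: the definition of mistake function only guarantees $\lim_{\epsilon\to 0}\lim_{n\to\infty}g(n,\epsilon)/n=0$, so your $h(\delta)$ should be read as $\limsup_{n\to\infty}g(n,\delta)/n$; the argument is unaffected.
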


\begin{proof}
First, we show that $P_Z(\varphi,g)\leq P_Z(\varphi).$
For any ${\bf U}\in\mathcal S(\mathcal U),$ we have
\begin{align*}
Z({\bf U})\subset Z({\bf U}^g).
\end{align*}
For any $\epsilon^*>0$, there is $\mathcal G^*\in \mathcal S(Z,\mathcal U,N)$  such that
\begin{align*}
m'(Z,\varphi,\mathcal U,s, N)&=\inf\limits_{\mathcal S(Z,\mathcal U,N)}\left\{\sum\limits_{{\bf U}\in\mathcal G}\exp(-sm({\bf U})+\sup\limits_{x\in Z({\bf U})}S_{m({\bf U})}\varphi(x))\right\}\\
&\geq \sum\limits_{{\bf U}\in\mathcal G^*}\exp(-(s+\epsilon^*)m({\bf U})+\sup\limits_{x\in Z({\bf U})}S_{m({\bf U})}\varphi(x)) \\
&\geq \sum\limits_{{\bf U}^g\in\mathcal G'}\exp(-(s+\epsilon^*)m({\bf U})+\sup\limits_{x\in Z({\bf U}^g)}S_{m({\bf U})}\varphi(x)-2g(m({\bf U}),L(\mathcal U))||\varphi||) \\
&\geq m'(Z,\varphi,\mathcal U,s+\epsilon^*+\gamma, N,g),
\end{align*}
where $\mathcal G'=\{{\bf U}^g:{\bf U}\in\mathcal G^*\},\gamma$ is a function such that
$2g(m({\bf U}),L(\mathcal U))||\varphi||\leq m({\bf U})\gamma$ for all $m({\bf U})\geq N$ and $\gamma\to0$ as  diam $\mathcal U\to0$.
Taking $N\to\infty,$ it yields
\begin{align*}
P'_Z(\varphi,\mathcal U)\geq P'_Z(\varphi,\mathcal U,g)+\epsilon^*+\gamma.
\end{align*}
As $\epsilon^*\to0$ and diam $\mathcal U\to0,$ we have
\begin{align*}
P'_Z(\varphi,g)\leq P'_Z(\varphi).
\end{align*}
Next, we show that $P_Z(\varphi,g)\geq P_Z(\varphi).$ For any ${\bf U}^g\in\mathcal S_m^g(\mathcal U),$ it can be  composed by
$\sum\limits_{i=0}^{g(m,L(\mathcal U))} \binom{m}{i}\sharp (\mathcal U)^{i-1}$ strings ${\bf U}\in\mathcal S_m(\mathcal U)$ such that
$Z({\bf U}^g)\subset \bigcup\limits_{{\bf U}\in\mathcal H} Z({\bf U})$, where $\mathcal H$ is the collection of such ${\bf U}.$
By Stirling formula, there exists $\gamma>0$ such that $\sum\limits_{i=0}^{g(m,L(\mathcal U))} \binom{m}{i}\sharp (\mathcal U)^i\leq \exp(m\gamma)$ for all $m\geq N$
and $\gamma\to0$  as diam $\mathcal U\to0$. For any $\epsilon^*>0,$ there is $\mathcal G''\in\mathcal S^g(Z,\mathcal U,N)$ such that
\begin{align*}
m'(Z,\varphi,\mathcal U,s, N, g)&=\inf\limits_{\mathcal S^g(Z,\mathcal U,N)}\sum\limits_{{\bf U}^g\in\mathcal G^g}\exp(-sm({\bf U})+\sup\limits_{x\in Z({\bf U}^g)}S_{m({\bf U})}\varphi(x))\\
&\geq \sum\limits_{{\bf U}^g_i\in\mathcal G''} \exp(-(s+\epsilon^*)m({\bf U}_i)+\sup\limits_{x\in Z({\bf U}_i^g)}S_{m({\bf U}_i)}\varphi(x))\\
&\geq \sum\limits_{{\bf U}_i\in\mathcal G'''}\exp(-n_i(s+\epsilon^*+\gamma)+\sup\limits_{x\in Z({\bf U}_i)}S_{m({\bf U}_i)}\varphi(x))\\
&\geq m'(Z,\varphi,\mathcal U,s+\epsilon^*+\gamma, N),
\end{align*}
where $\mathcal G'''=\{{\bf U}: {\bf U}_i^g\in \mathcal G''\}$ is the collection of the elements of all $\mathcal H$ corresponding to each ${\bf U}^g\in\mathcal G''.$ Taking $N\to\infty,$ it yields
\begin{align*}
P'_Z(\varphi,\mathcal U,g)\geq P'_Z(\varphi,\mathcal U)+\epsilon^*+\gamma.
\end{align*}
Letting $\epsilon^*\to0$ and diam $\mathcal U\to0,$ we have
\begin{align*}
P'_Z(\varphi,g)\geq P'_Z(\varphi).
\end{align*}
This completes the proof.
\end{proof}
As an application, we study the pressure defined by average metric at the end of the paper.
Let $(X,f)$ be a TDS. The average metric on $X$ is given by
\begin{align*}
\overline{d_{n}}(x,y):=\dfrac{1}{n}\sum_{i=0}^{n-1}d(T^{i}x,T^{i}y).
\end{align*}
The average dynamical ball with center  $x$ and radius $\epsilon$ is denoted by
$
B_{\overline{d}_{n}}(x,\epsilon).
$
 In \cite{grogerjager}, Groger and Jager proved the classical entropy of the whole system is the same as the entropy defined by replace Bowen balls with  average dynamical balls. We will use Theorem \ref{thm2} to show the classical Pesin pressure on any subset is equal to the pressure defined by replace Bowen balls with  average dynamical balls.

In fact, we choose $g(n,\epsilon)=n\epsilon.$ By the following lemma, it is easy to obtain the desired result and we omit the proof.
\begin{lemma}
For any $x\in X$, $n\in\mathbb{N}$ and $\epsilon>0$, we have $$B_{n}(x,\epsilon)\subset B_{\overline{d}_{n}}(x,\epsilon)\subset B_{n}(g;x,\sqrt{\epsilon}).$$
\begin{proof}
For any $y\in X$, if $d_{n}(x,y)<\epsilon$, then $\overline{d}_{n}(x,y)<\epsilon$, so we have $B_{n}(x,\epsilon)\subset B_{\overline{d_{n}}}(x,\epsilon)$.
Set
\begin{align*}
 I_{1}&=\{i\in\Lambda_{n}:d(T^{i}x,T^{i}y)<\sqrt{\epsilon}\}, \\
 I_{2}&=\{i\in\Lambda_{n}:d(T^{i}x,T^{i}y)\geq\sqrt{\epsilon}\},
\end{align*}
where $\Lambda_{n}=\{0,1,\cdots,n-1\}.$
Since
\begin{align*}
 \dfrac{1}{n}\sum_{i=0}^{n-1}d(T^{i}x,T^{i}y)&=\frac{\sum_{i\in I_{1}}d(T^{i}x,T^{i}y)+\sum_{i\in I_{2}}d(T^{i}x,T^{i}y)}{n}\\
 &\geq\frac{\sum_{i\in I_{1}}d(T^{i}x,T^{i}y)+\sqrt{\epsilon}\sharp I_{2}}{n}\geq\dfrac{\sqrt{\epsilon}}{n}\sharp I_{2},
\end{align*}
then if $\overline{d}_{n}(x,y)<\epsilon$, we have $$\sharp I_{2}\leq n\sqrt{\epsilon}.$$
Therefore, we have $B_{\overline{d_{n}}}(x,\epsilon)\subset B_{n}(g;x,\sqrt{\epsilon})$.
\end{proof}
\end{lemma}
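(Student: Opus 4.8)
The plan is to establish the two inclusions independently; both are elementary, the second resting on a Markov-type estimate tailored to the choice $g(n,\epsilon)=n\epsilon$.

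For the left inclusion $B_{n}(x,\epsilon)\subset B_{\overline{d}_{n}}(x,\epsilon)$, I would simply note that if $y\in B_{n}(x,\epsilon)$ then $d(f^{i}x,f^{i}y)<\epsilon$ for every $i\in\Lambda_{n}$, so $\overline{d}_{n}(x,y)=\frac1n\sum_{i=0}^{n-1}d(f^{i}x,f^{i}y)<\epsilon$, whence $y\in B_{\overline{d}_{n}}(x,\epsilon)$. There is nothing to overcome here.

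For the right inclusion $B_{\overline{d}_{n}}(x,\epsilon)\subset B_{n}(g;x,\sqrt{\epsilon})$, I would fix $y\in B_{\overline{d}_{n}}(x,\epsilon)$ and split the time interval into the good set $I_{1}=\{i\in\Lambda_{n}:d(f^{i}x,f^{i}y)<\sqrt{\epsilon}\}$ and the bad set $I_{2}=\Lambda_{n}\setminus I_{1}$. On $I_{2}$ each term of the sum is at least $\sqrt{\epsilon}$, so $n\epsilon>\sum_{i=0}^{n-1}d(f^{i}x,f^{i}y)\geq\sqrt{\epsilon}\,\sharp I_{2}$, hence $\sharp I_{2}<n\sqrt{\epsilon}=g(n,\sqrt{\epsilon})$. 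Taking $\Lambda=I_{1}$ in the definition of the mistake Bowen ball then gives $\sharp(\Lambda_{n}\setminus\Lambda)\leq g(n,\sqrt{\epsilon})$ together with $\max\{d(f^{j}x,f^{j}y):j\in\Lambda\}<\sqrt{\epsilon}$, so $y\in B_{n}(g;x,\sqrt{\epsilon})$.

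No step is genuinely hard; the only thing to get right is the threshold separating good from bad times. One wants a scale $t=t(\epsilon)$ that tends to $0$ with $\epsilon$ (so that the radius $t$ of the mistake ball still shrinks) and for which the fraction of bad times, bounded by $\epsilon/t$, does not exceed the mistake allowance $g(n,t)/n=t$; this forces $t\geq\sqrt{\epsilon}$, and the balanced choice $t=\sqrt{\epsilon}$ is the tightest one, which is exactly why the statement carries a $\sqrt{\epsilon}$. Since $g(n,\epsilon)=n\epsilon$ is plainly a mistake function ($g(n,\epsilon)\leq g(n+1,\epsilon)$ and $\lim_{\epsilon\to0}\lim_{n\to\infty}g(n,\epsilon)/n=\lim_{\epsilon\to0}\epsilon=0$), combining this lemma with Theorem \ref{thm2} yields at once that the Pesin pressure defined with average dynamical balls in place of Bowen balls equals the classical Pesin pressure $P_{Z}(\varphi)$: the two inclusions sandwich the average-ball pressure at scale $\delta$ between $P_{Z}(\varphi,\sqrt{\delta},g)$ and $P_{Z}(\varphi,\delta)$, and letting $\delta\to0$ closes the gap.
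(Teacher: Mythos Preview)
Your proof is correct and follows essentially the same approach as the paper: both inclusions are handled identically, with the second using the split into good and bad times at threshold $\sqrt{\epsilon}$ and the Markov-type bound $\sharp I_{2}\leq n\sqrt{\epsilon}=g(n,\sqrt{\epsilon})$. Your additional remark explaining why $\sqrt{\epsilon}$ is the natural threshold is a nice touch not present in the paper.
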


\noindent\textbf{Acknowledgments:}

The first author was supported by NNSF of China (11601235),  NSF of Jiangsu Province (BK20161014), NSF of the Jiangsu Higher Education Institutions of China (16KJD110003),  China Postdoctoral Science Foundation (2016M591873) and China Postdoctoral Science Special Foundation (2017T100384). The third author was  supported by NNSF of China (11671208, 11431012). The work was also funded by the Priority Academic Program Development of Jiangsu Higher Education Institutions.


\begin{thebibliography}{99}

\bibitem{chengzhaocao} W.C. Cheng, Y. Zhao,  Y. Cao,  {\em  Pressures for asymptotically sub-additive potentials under a mistake function}, Discrete Contin. Dyn. Syst. 32 (2012),  487-497.

\bibitem{climenhaga} V. Climenhaga {\em Bowen's equation in the non-uniform setting},
Ergodic Theory Dynam. Systems, 31 (2011), 1163-1182.

\bibitem{grogerjager}M. Gr\"oger and T. J\"ager, {\em Some remarks on modified power entropy}, Contemp. Math. 669 (2016), 105-122.

\bibitem{pesinpitskel} Y. Pesin and B. Pitskel,  {\em Topological pressure and the variational principle for noncompact
sets }, Funct. Anal. Appl.  18 (1984) 307-318.

\bibitem{pfistersullivan}  C. Pfister and W. Sullivan. {\em On the topological entropy of saturated sets}, Ergodic Theory
Dynam. Systems, 27 (2007) 929-956.


\bibitem{ruelle} D. Ruelle, {\em Statistical mechanics on a compact set with $\mathbb Z^v$ action satisfying expansiveness and specification}, Trans. Amer. Math. Soc. 187 (1973) 237-251.

\end{thebibliography}
\end{document}